\newtheorem{theorem}{Theorem}[section]
\newtheorem{proposition}[theorem]{Proposition}
\newcommand{\realnonnegative}{{\mathbb{R}}_{\ge 0}}
\newcommand{\integerspositive}{\mathbb{Z}_{>0}}
\newcommand{\oprocendsymbol}{\hbox{$\bullet$}}
\newcommand{\oprocend}{\relax\ifmmode\else\unskip\hfill\fi\oprocendsymbol}
\DeclareMathOperator*{\minimize}{minimize}
\DeclareMathOperator*{\maximize}{maximize}
\newcommand{\N}{\mathcal{N}}
\newcommand{\longthmtitle}[1]{\mbox{}\textup{\bf (#1):}}
\newcommand{\G}{{\mathcal{G}}}
\newcommand{\E}{{\mathcal{E}}}
\newcommand{\bx}{\mathbf{x}}
\newcommand{\by}{\mathbf{y}}
\newcommand{\Nsum}{\sum_{i=1}^N}
\newcommand{\tsum}{\sum_{t=1}^n}
\newcommand{\tS}{\tilde{S}}
\newcommand{\tI}{\tilde{I}}
\newcommand{\bR}{\bar{R}}
\newcommand{\bB}{\bar{B}}
\newcommand{\bT}{\bar{T}}
\newcommand{\F}{\mathcal{F}}
\newcommand{\X}{\mathcal{X}}
\newcommand{\Y}{\mathcal{Y}}
\newcommand{\budget}{\mathcal{B}}
\newcommand{\budgetR}{\mathcal{B}_r}
\newcommand{\budgetB}{\mathcal{B}_b}
\newcommand{\avginf}{\tilde{I}_n}
\begin{document}

\title{Infection-Curing Games over Polya Contagion Networks}

\author{Greg Harrington \qquad Fady Alajaji \qquad Bahman Gharesifard \thanks{The authors are with the Department of Mathematics and Statistics at Queen's University, Kingston, ON, Canada~\texttt{12gth@queensu.ca, fady@mast.queensu.ca, bahman@mast.queensu.ca}. This work was partially supported by the Natural Sciences and Engineering Research Council of Canada.}
}
\maketitle

\begin{abstract}
We investigate infection-curing games on a network epidemics model based on the classical Polya urn scheme that accounts for spatial contagion among neighbouring nodes. We first consider the zero-sum game between competing agents using the cost measure for the average infection in the network. Due to the complexity of this problem we define a game on a proxy measure given by the so-called expected network exposure, and prove the existence of a Nash equilibrium that can be determined numerically using gradient descent algorithms. Finally, a number of simulations are performed on small test networks to provide empirical evidence that a Nash equilibrium exists for games defined on the average network infection.
\end{abstract}

\begin{keywords}
Network epidemics, Polya contagion, Nash equilibrium.
\end{keywords}

\section{Introduction}\label{section:intro}

The study of epidemics on networks is an active research topic (e.g., see~\cite{DE-JK:10,PVM-JO-RK:09,CN-VMP-GJP:16} and the references therein). Real-life examples include the propagation of burst errors in a wireless communication channel~\cite{FA-TF:94}, of a biological disease through a population~\cite{LK-MA-KD-SK-AO:14}, of malware in computer or smartphone systems~\cite{MG-WG-DT:03}, and the dissemination of rumors~\cite{ER:03} and competing opinions~\cite{EA-LAA:05} in social networks.

In this paper, we examine two player zero-sum games over Polya contagion networks as introduced in~\cite{MH-FA-BG:17, MH-FA-BG:18}. The setup for this model is an adaptation of the classical Polya contagion process~\cite{GP-FE:23, GP-FE:28, GP:30} to allow for spatial interactions between neighbouring nodes. This model is similar to the well known class of Susceptible-Infected-Susceptible (SIS) epidemics models~\cite{DE-JK:10}. Indeed in~\cite{MH-FA-BG:18}, it was empirically shown that the network Polya contagion model can mimic the behaviour of the SIS process. Furthermore, SIS models have been studied in a game-theoretic context in~\cite{ARH-SS:17,PEP-JL-CLB-AN-TB:17} and references therein. We herein establish a similar problem for the network Polya contagion model.

First, we define a two-player game on the average infection rate for a connected network. This setup is an extension of the contagion curing problem that was studied extensively in~\cite{MH-FA-BG:18-2}. In order to simplify the problem of finding optimal control policies, we consider a game on the proxy measure of the expected network exposure. We establish that the expected network exposure is convex as a function of the curing parameters and concave as a function of the infection parameters. We prove that under budget constraints, there exists a Nash equilibrium for the game on the expected network exposure, which can be determined using gradient descent algorithms. Finally, we run a series of simulations on small test networks, to provide empirical evidence that a Nash equilibrium exists for the infection-curing game on the average infection rate, and to support the claim that such an equilibrium policy can be closely approximated using the equilibrium policy for the expected network exposure.

The rest of the paper is organized as follows. Section~\ref{sec:prelim} familiarizes the reader with important preliminary material, including a brief description of the classical Polya contagion process as well as the network Polya contagion process. Section~\ref{sec:game} defines the two-player games that are studied in this paper, and proves important convexity/concavity results as well as the existence of a Nash equilibrium for the expected network exposure game. Section~\ref{sec:simulations} presents illustrative simulation results. Finally, Section~\ref{sec:future} concludes the paper and highlights a few areas for potential future research.

\section{Preliminaries}\label{sec:prelim}

Let $(\Omega,\F,P)$ be a probability space. Consider the stochastic process $\{Z_n\}_{n=1}^{\infty}$, where $Z_n=(Z_{1,n},\ldots,Z_{N,n})$ is a random vector on $\Omega$. We represent by $\{\F_n\}_{n=1}^{\infty}$, the natural filtration on the process $\{Z_n\}_{n=1}^{\infty}$. We write the sequence $(Z_{i,1},\ldots,Z_{i,n})$ as $Z_i^n$, and more generally we use the notation $Z_{i,s}^t=(Z_{i,s},Z_{i,s+1},\ldots,Z_{i,t})$, where $1\leq s<t\leq n$. Further clarification of standard probability concepts can be found in various texts such as~\cite{RA-CD:00,GG-DS:01}.

A Nash equilibrium is a solution for a non-cooperative multiplayer game from which no player may improve its payoff by unilaterally changing its strategy ~\cite{JN:50,JN:51}. In this paper, we consider games with deterministic (pure) strategies. In the context of a two player zero-sum game, denote by $\X$ and $\Y$ the set of allowable strategies for player one and two, respectively. More precisely, consider the case where player one receives a payoff $f(x,y)$, with $x\in\X$ and $y\in\Y$ strategies of player one and two, respectively. Then, $(x^*,y^*)\in\X\times\Y$ is a Nash equilibrium if $\forall x\in\X$, $\forall y\in\Y$: 
\begin{equation*}
f(x,y^*)\leq f(x^*,y^*)\leq f(x^*,y).
\end{equation*}

\subsection{Classical Polya Contagion Process}

We recall the classical Polya contagion process~\cite{GP-FE:23, GP-FE:28, GP:30}. An urn contains $R \in\integerspositive$ red balls and $B \in \integerspositive$ black balls. We use $T=R+B$ to denote the total number of balls in the urn. At each time, $n$, a ball is drawn from the urn and returned with $\Delta>0$ balls of the same colour. From an urn sampling point of view the parameter $\Delta$ is integer-valued; however, we allow them to be real-valued for mathematical convenience (e.g., when determining optimal policies). The random variable $Z_{n}$ is used to indicate the colour of the ball on the $n$th draw, where $Z_{n}=1$ if the drawn ball was red and $Z_{n}=0$ if the drawn ball was black. We can then use $U_{n}$ to denote the proportion of red balls in the urn after the $n$th draw, where
\begin{align*}
	U_n &:= \frac{R + \Delta\tsum Z_{t}}{T + n\Delta} = \frac{\rho_c + \delta_c\tsum Z_{t}}{1+n\delta_c}
\end{align*}
where $\rho_c = \frac{R}{T}$ is the initial proportion of red balls in the urn and $\delta_c = \frac{\Delta}{T}$ is a correlation parameter. The conditional probability of drawing a red ball at time $n$ , given the past history of draws $Z^{n-1} := (Z_{1}, \ldots, Z_{n-1})$, is given by
\begin{align*}
	P(Z_n = 1 \ | \ Z^{n-1} ) &= \frac{R + \Delta\sum_{t=1}^{n-1} Z_{t}}{T + (n-1)\Delta} = U_{n-1}.
\end{align*}

\subsection{Network Polya Contagion Process}

We now review the network Polya contagion process introduced in~\cite{MH-FA-BG:17, MH-FA-BG:18}. Let $\G = (V,\E)$ be a network, where $V = \{1, \ldots, N\}$ is the set of $N \in \integerspositive$ nodes and $\E \subset V\times V$ is the set of edges. Throughout, $\G$ is taken to be undirected, and is assumed to be connected. The set of neighbours to node $i$ is given by $\N_i=\{v\in V:(i,v)\in\E\}$. We then define $\N_i'=\{i\}\cup\N_i$. Each node $i\in V$ is assigned an urn with $R_i \in \integerspositive$ red balls and $B_i \in\integerspositive$ black balls, with the total number of balls at node $i$ being denoted by $T_i = R_i+B_i$.

We next introduce the concept of super urns for node $i$, which contains the red and black balls at node $i$ as well as the red and black balls of each of its neighbouring nodes. This concept is illustrated in Figure~\ref{fig:super_urn}. For each node $i\in V$, we use $\bR_i=\sum_{j \in \N_i'}R_j$ and $\bB_i=\sum_{j \in \N_i'}B_j$ to denote the number of red and black balls respectively in node $i$'s super urn. The total number of balls in the $i$th super urn is given by $\bT_i = \bR_i+\bB_i$. Similar to the classical Polya process, a draw is conducted for each node at each time, and then a number of balls of the same colour is added to that node's individual urn. In this case, however, the draw is conducted on the super urn of each node. As well, we may allow for the number of added balls to vary based on which node the draw was for, the colour of the draw, and the time at which the draw occurred. Thus, for node $i$ at time $t$, if a red ball is drawn we add $\Delta_{r,i}(t)$ red balls to node $i$'s individual urn and if a black ball is drawn we add $\Delta_{b,i}(t)$ black balls to node $i$'s individual urn. We use an indicator, $Z_{i,n}$ to denote the colour of the $n$th draw for node $i$, where $Z_{i,n}=1$ if the draw was red and $Z_{i,n}=0$ if the draw was black. We refer to  $\{\Delta_{b,i}(n)\}_{n=1}^{\infty}$ as the curing parameters and  $\{\Delta_{r,i}(n)\}_{n=1}^{\infty}$ as the infection parameters for node $i$.

{
\begin{figure}[!ht]
\centering 
	\includegraphics[width=0.7\linewidth]{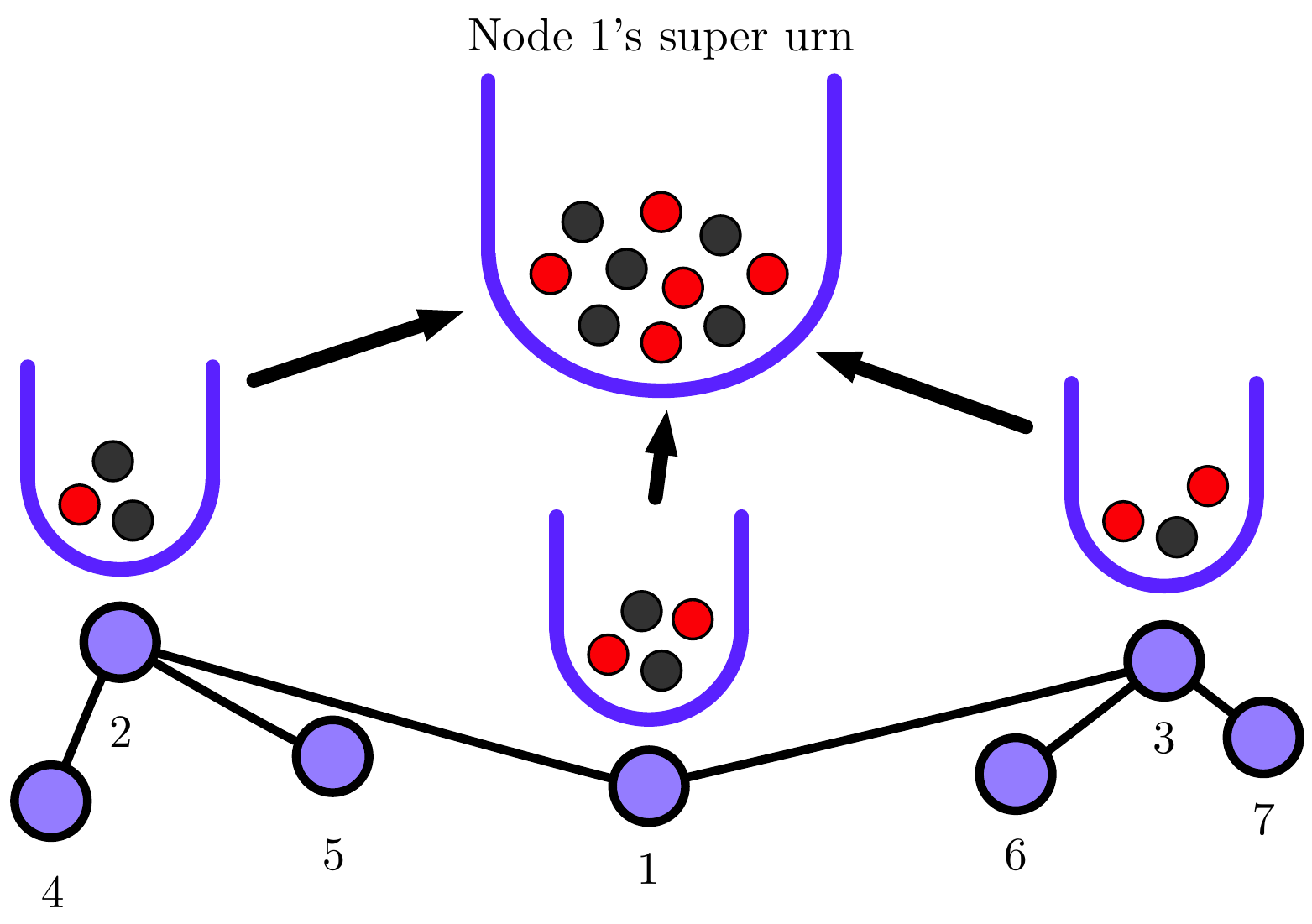}
	\caption{Illustration of a super urn in a network.}
	\label{fig:super_urn}
\end{figure}
}

For this process, we can represent the proportion of red balls in node $i$ after the $n$th draw by $U_{i,n}$, where 
\begin{align*}
	U_{i,n} &= \frac{R_i + \tsum \Delta_{r,i}(t)Z_{i,t}}{X_{i,n}}.
\end{align*}
Here
\begin{align} \label{eq:X_j,n}
X_{i,n}=T_i + \sum_{t=1}^{n} \Delta_{r,i}(t)Z_{i,t} + \Delta_{b,i}(t)(1-Z_{i,t})
\end{align}
represents the total number of balls in node $i$'s urn after the $n$th draw. We represent the proportion of red balls in node $i$'s super urn after the $n$th draw as $S_{i,n}$, where
\begin{align}\label{eq:S_n}
	S_{i,n} 
	&= \frac{\bR_i + \sum_{j \in \N_i'}\sum_{t=1}^{n} \Delta_{r,j}(t)Z_{j,t}}{\sum_{j \in \N_i'}X_{j,n}}.
\end{align}
We take $S_{i,0}=\frac{\bR_i}{\bT_i}$, which is just the initial super urn proportion of red balls for node $i$. We define
\begin{align*}
	\tS_n &= \frac{1}{N}\sum_{i=1}^N S_{i,n},
\end{align*}
which we refer to as the \textit{network exposure}.

The conditional probability of drawing a red ball for node $i$ at time $n$ , given the past history of draws for all the nodes 
\begin{equation*}
	\{Z_j^{n-1}\}_{j=1}^N := \{(Z_{1,1}, \ldots, Z_{1,n-1}),\ldots(Z_{N,1}, \ldots, Z_{N,n-1})\},
\end{equation*}
is given by
\begin{align*}
	P\left(Z_{i,n} = 1 | \{Z_j^{n-1}\}_{j=1}^N\right) &= \frac{\bR_i + \sum_{j \in \N_i'}\sum_{t=1}^{n-1} \Delta_{r,j}(t)Z_{j,t}}{\sum_{j \in \N_i'}X_{j,n-1}} \\
	&= S_{i,n-1}.
\end{align*}
Finally, we refer to
\begin{align*}
\avginf &= \frac{1}{N} \Nsum P(Z_{i,n}=1)
\end{align*}
as the \textit{average infection rate}.

\section{Game-Theoretic Setup}\label{sec:game}

Previously,~\cite{MH-FA-BG:18-2} has looked at the problem of minimizing the limiting average infection rate subject to a budget $\budget$ on the curing parameters $\{\Delta_{b,i}(n)\}_{i=1}^N$ at each time step. In this setup, $\budget$ is fixed for each time step and all other values are assumed to be given. We now consider a two player game, where player one looks to minimize the average infection rate, while player two looks to maximize this same value. Player one controls the distribution of curing parameters in accordance with a budget, $\budgetB$, while player two controls the distribution of the infection parameters  in accordance with a budget, $\budgetR$. In both cases the budget is fixed for all time steps, and for any given time step the allocation of the budget is performed prior to the draw. Thus, if either player allocates resources to a node for which the opposite colour ball is drawn, those resources will go to waste. %Both players are ignorant of the other player's decision prior to the draws being performed at each time step.

Formally, player one's objective is as follows:
\begin{align}\label{eq:obj_curing}
	\minimize_{\{\{\Delta_{b,i}(k)\}_{i=1}^N:\sum_{i=1}^N\Delta_{b,i}(k)=\budgetB\},k=1,\ldots,t}\tI_t,
\end{align}
assuming the minimum exists, while player two's objective takes the form:
\begin{align}\label{eq:obj_infect}
	\maximize_{\{\{\Delta_{r,i}(k)\}_{i=1}^N:\sum_{i=1}^N\Delta_{r,i}(k)=\budgetR\},k=1,\ldots,t}\tI_t,
\end{align}
assuming the maximum exists.
%Formally, player one's objective function is as follows:
%\begin{align}\label{eq:obj_curing}
%\min_{\substack{\Nsum \Delta_{b,i}(t) \leq \budgetB \\ \forall t}} \limsup_{t\to\infty}\tilde{I}_t,
%\end{align}
%while player two's objective function takes the form:
%\begin{align}\label{eq:obj_infect}
%\max_{\substack{\Nsum \Delta_{r,i}(t) \leq \budgetR \\ \forall t}} \liminf_{t\to\infty}\tilde{I}_t.
%\end{align}

For a general network, finding an optimal control policy for either player can be complicated, and the behaviour of $\avginf$ is difficult to study. As discussed in~\cite{MH-FA-BG:18-2}, there is a strong correlation between the behaviour of $\tS_n$ and $\avginf$, and thus we can simplify our problem by first considering a game on the expected network exposure for a given time step, rather than the limiting average infection rate.

We consider the expected network exposure $E[\tS_n|\mathcal{F}_{n-1}]$ as a two player zero-sum game, where player one looks to minimize the expected network exposure over the curing parameters $\{\Delta_{b,i}(n)\}_{i=1}^N$ and player two looks to maximize the same value over the infection parameters $\{\Delta_{r,i}(n)\}_{i=1}^N$. We begin with an important result about the expected network exposure:

\begin{proposition}\longthmtitle{Convexity-Concavity of Network Exposure}\label{thm:convex}
Let $\G = (V,\E)$ be a general network, and consider the Polya network contagion model on $\G$, with arbitrary initial conditions. Then, the expected network exposure $E[\tS_n|\F_{n-1}]$ is convex with respect to the curing parameters $\{\Delta_{b,i}(n)\}_{i=1}^N$ and concave with respect to the infection parameters $\{\Delta_{r,i}(n)\}^N_{i=1}$, for all $n$.
\end{proposition}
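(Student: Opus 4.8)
The plan is to condition on $\F_{n-1}$ and exploit the fact that, given the past, the time-$n$ draws $\{Z_{j,n}\}_{j=1}^N$ are (conditionally on $\F_{n-1}$) independent Bernoulli random variables whose success probabilities $P(Z_{j,n}=1\mid\F_{n-1})=S_{j,n-1}$ are determined entirely by the history up to time $n-1$; in particular they are $\F_{n-1}$-measurable and do not depend on the time-$n$ parameters $\{\Delta_{r,i}(n)\}$ and $\{\Delta_{b,i}(n)\}$ over which we optimize. Writing $E[\tS_n\mid\F_{n-1}]=\frac1N\sum_{i=1}^N E[S_{i,n}\mid\F_{n-1}]$ and expanding each conditional expectation as a finite sum over the joint outcomes $\omega$ of the neighbouring draws $\{Z_{j,n}\}_{j\in\N_i'}$, one obtains
\[
E[S_{i,n}\mid\F_{n-1}] \;=\; \sum_{\omega} P(\omega)\, S_{i,n}(\omega),
\]
where the weights $P(\omega)$ are nonnegative constants with respect to the optimization variables. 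Since a nonnegative combination of convex (resp. concave) functions is convex (resp. concave), and averaging over $i$ preserves this, it suffices to prove that each term $S_{i,n}(\omega)$ is convex in the curing parameters and concave in the infection parameters.

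First I would fix an outcome $\omega$, write $z_j:=Z_{j,n}(\omega)\in\{0,1\}$, and isolate the time-$n$ contributions in the numerator and denominator of $S_{i,n}$ from~\eqref{eq:S_n}. Collecting the $\F_{n-1}$-measurable (hence constant) parts into $a_i:=\bR_i+\sum_{j\in\N_i'}\sum_{t=1}^{n-1}\Delta_{r,j}(t)Z_{j,t}$ and $b_i:=\sum_{j\in\N_i'}X_{j,n-1}$, this yields
\[
S_{i,n}(\omega) \;=\; \frac{a_i+\sum_{j\in\N_i'}\Delta_{r,j}(n)\,z_j}{b_i+\sum_{j\in\N_i'}\big(\Delta_{r,j}(n)\,z_j+\Delta_{b,j}(n)(1-z_j)\big)}.
\]
The decisive structural observation is that the common term $\sum_{j\in\N_i'}\Delta_{r,j}(n)z_j$ appears identically in both numerator and denominator, while the curing parameters appear only in the denominator.

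Next I would treat the two directions separately. For the curing direction, holding the infection parameters fixed makes the numerator a nonnegative constant $P\ge 0$ and the denominator an affine function of $\{\Delta_{b,i}(n)\}$ that is strictly positive on the feasible set; hence $S_{i,n}(\omega)=P/(\text{affine})$, and since $t\mapsto 1/t$ is convex for $t>0$ and composition with an affine map preserves convexity, each term is convex in the curing parameters. For the infection direction, holding the curing parameters fixed, I would write $S_{i,n}(\omega)=\frac{a_i+w}{c_i+w}=1+\frac{a_i-c_i}{c_i+w}$, where $w:=\sum_{j\in\N_i'}\Delta_{r,j}(n)z_j$ is affine and nonnegative and $c_i:=b_i+\sum_{j\in\N_i'}\Delta_{b,j}(n)(1-z_j)$. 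Because $a_i$ is the number of red balls in node $i$'s super urn after draw $n-1$ and $c_i\ge b_i\ge a_i$ is the corresponding total count, the coefficient $a_i-c_i\le 0$; as $t\mapsto 1/t$ is convex, $(a_i-c_i)/(c_i+w)$ is concave in $w$, and composing with the affine map $w$ gives concavity in the infection parameters.

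The step I expect to require the most care is the justification that the mixing weights $P(\omega)$ are genuinely constant in the optimization variables, i.e.\ that the joint law of the time-$n$ draws given $\F_{n-1}$ depends only on the history through the $S_{j,n-1}$ and not on the time-$n$ parameters; this is exactly what lets the conditional expectation be treated as a fixed convex/concave combination rather than a ratio with parameter-dependent weights. The remaining work is book-keeping: confirming that all denominators stay strictly positive on the budget-constrained feasible set (so that the convexity of $t\mapsto 1/t$ applies) and verifying the inequality $a_i\le c_i$ that makes the infection term concave rather than convex. Once these are in place, summing over $\omega$ and averaging over $i$ delivers the claim for all $n$.
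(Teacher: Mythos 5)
Your proposal is correct and follows essentially the same route as the paper's proof: both condition on $\F_{n-1}$, expand $E[S_{i,n}\mid\F_{n-1}]$ as a finite mixture over the time-$n$ draw outcomes whose weights do not depend on the time-$n$ parameters, and obtain per-outcome convexity in the curing parameters as a constant numerator over a positive affine denominator. Your concavity step, rewriting $\frac{a_i+w}{c_i+w}=1+\frac{a_i-c_i}{c_i+w}$ with $a_i-c_i\le 0$, is algebraically identical to the paper's symmetry observation that $1-f_{i,n}(\bx,\by,z_n)$ is convex in the infection parameters, so the two arguments coincide.
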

\begin{proof} 

Using~\eqref{eq:X_j,n} and~\eqref{eq:S_n}, we consider $E[\tS_n|\F_{n-1}]$ as a function of the vectors 
\begin{gather*}
	\bx=(x_1,\ldots,x_N)^T=(\Delta_{b,1}(n),\ldots, \Delta_{b,N}(n))^T\\
	\by=(y_1,\ldots,y_N)^T=(\Delta_{r,1}(n),\ldots, \Delta_{r,N}(n))^T, 
\end{gather*}
by reformulating~\eqref{eq:S_n} as follows:
\begin{align*}
	S_{i,n}=f_{i,n}(\bx,\by,Z_n) = \frac{c_i+\delta_i(\by,Z_n)}{c_i+d_i+\sigma_i(\bx,Z_n)+\delta_i(\by,Z_n)},
\end{align*}
where
\begin{gather*}
	c_i=\bR_i+\sum_{t=1}^{n-1}\sum_{j\in\N_i'}\Delta_{r,j}(t)Z_{j,t} \\
	d_i=\bB_i+\sum_{t=1}^{n-1}\sum_{j\in\N_i'}\Delta_{b,j}(t)(1-Z_{j,t}) \\
	\delta_i(\by,Z_n)=\sum_{j\in\N_i'}y_j Z_{j,n} \\
	\sigma_i(\bx,Z_n)=\sum_{j\in\N_i'}x_j(1-Z_{j,n}).
\end{gather*}

Alternatively, let $A$ represent the adjacency matrix (including self connections) for our given network $\G$, where $A_{ij}=1$ iff $i\in\N_j'$, and $A_{ij}=0$, otherwise. Then construct an $N\times N$ square matrix, $D$, where $D_{ij}=A_{ij}(1-Z_{j,n})$. Letting $D_i$ represent the $i^{th}$ row of the matrix $D$, we get $\sigma_i(x,Z_n) = D_i\bx$, where
\begin{align*}
	D_i\bx = \begin{bmatrix}A_{i1}(1-Z_{1,n})& \cdots &A_{iN}(1-Z_{N,n})\end{bmatrix}\begin{bmatrix}x_1\\ \vdots \\ x_N\end{bmatrix}.
\end{align*}

Likewise, we can construct an $N\times N$ square matrix, $C$, where $C_{ij}=A_{ij}Z_{j,n}$. Letting $C_i$ represent the $i^{th}$ row of the matrix $C$, we get the following:
\begin{align*}
	\delta_i(y,Z_n) = C_i\by = \begin{bmatrix}A_{i1}Z_{1,n}& \cdots &A_{iN}Z_{N,n}\end{bmatrix}\begin{bmatrix}y_1\\ \vdots \\ y_N\end{bmatrix}.
\end{align*}

Allowing us to alternatively write
\begin{align*}
	f_{i,n}(\bx,\by,Z_n) = \frac{c_i+C_i\by}{c_i+d_i+C_i\by+D_i\bx},
\end{align*}
noting of course that the matrices $C$ and $D$ are functions of $Z_n$.

Taking the expectation of $S_{i,n}$ given the history of the contagion process up to time $n-1$ we get
\begin{align}\label{eq:convex}
	E[S_{i,n}|\F_{n-1}]&=E[f_{i,n}(\bx,\by,Z_n)|\F_{n-1}] \nonumber\\
	&=\sum_{z_n\in\{0,1\}^N}f_{i,n}(\bx,\by,z_n)P(Z_n=z_n|Z^{n-1}).
\end{align}

We can note that $P(Z_n=z_n|Z^{n-1})$ is independent of  our choice of $\bx$, and for any fixed realization $Z_n = (z_{1,n},\ldots,z_{N,n})\in\{0,1\}^N$, $f_{i,n}(\bx,\by,z_n)$ is convex in $x$ over $\realnonnegative^N$. Hence, it follows that $E[S_{i,n}|\F_{n-1}]$ is convex in $\bx$ over $\realnonnegative^N$, and thus so too is $E[\tS_n|\F_{n-1}]$. Concavity in $\by$ follows from a symmetry argument wherein we note 
\begin{align*}
	1-f_{i,n}(\bx,\by,Z_n)= \frac{d_i+D_i\bx}{c_i+d_i+C_i\by+D_i\bx}
\end{align*}
is convex in $\by$, thus showing $f_{i,n}(\bx,\by,Z_n)$ is concave in $\by$. The rest of the proof follows as in the convexity argument for $\bx$.
\end{proof}

The natural symmetry of this model makes this result rather intuitive, and allows for us to establish a nice setup for the game theoretic problem. We have proven that $E[\tS_n|\F_{n-1}]$ is convex in $\bx$ and concave in $\by$ over $\realnonnegative^N\times\realnonnegative^N$. In order to get a better understanding of this function, we consider the partial derivatives with respect to $\bx$ and $\by$. We note that

\begin{gather*}
	\nabla_{\bx}f_{i,n}(\bx,\by,z_n)=\frac{-\nabla_{\bx}D_i\bx(c_i+C_i\by)}{(c_i+d_i+C_i\by+D_i\bx)^2}\\
	\nabla_{\by}f_{i,n}(\bx,\by,z_n)=\frac{\nabla_{\by}C_i\by(d_i+D_i\bx)}{(c_i+d_i+C_i\by+D_i\bx)^2},
\end{gather*}

and furthermore

\begin{gather*}
	\frac{\partial}{\partial x_j}D_i\bx=A_{ij}(1-z_{j,n})\\
	\frac{\partial}{\partial y_j}C_i\by=A_{ij}z_{j,n}.
\end{gather*}

Since $A_{ij}$ and $z_{j,n}$ can only take values in $\{0,1\}$, we get that $\frac{\partial}{\partial x_j}f_{i,n}(\bx,\by,z_n)\leq0$ and $\frac{\partial}{\partial y_j}f_{i,n}(\bx,\by,z_n)\geq0$. Using this fact in conjunction with equation~\eqref{eq:convex} we determine that over the space $\X\times\Y=\mathbb{R}^N_{\geq0}\times\mathbb{R}^N_{\geq0}$, $E[\tS_n|\F_{n-1}]$ has no saddle point. This lack of saddle point is demonstrated in Figure~\ref{fig:samplegame}, which depicts the shape of $E[\tS_n|\F_{n-1}]$ for a single node network. However, given our fixed allocation budgets $\budgetB$ and $\budgetR$, we restrict ourselves to considering sets of the form $\X\times\Y = \{\{\Delta_{b,i}(n)\}_{i=1}^N\in\realnonnegative^N|\sum_{i=1}^N\Delta_{b,i}(n)\leq\budgetB\}\times\{\{\Delta_{r,i}(n)\}_{i=1}^N\in\realnonnegative^N|\sum_{i=1}^N\Delta_{r,i}(n)\leq\budgetR\}$.

\begin{figure} [!ht]
\centering{
  \includegraphics[width=\linewidth]{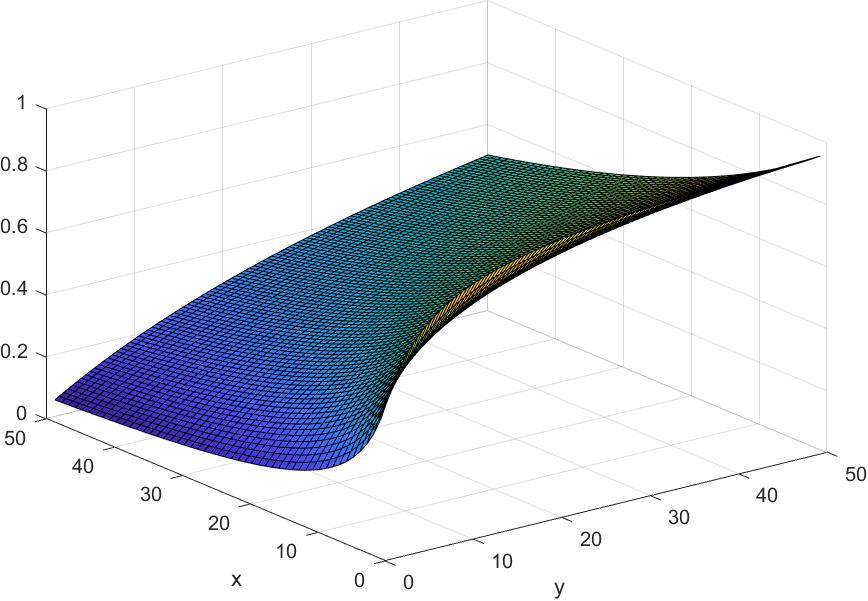}
}
  \caption{Sample plot of $E[\tS_n|\F_{n-1}]$ for a single node network. A saddle point will not exist over $\mathbb{R}^N_{\geq0}\times\mathbb{R}^N_{\geq0}$ regardless of the network structure.}
  \label{fig:samplegame}
\end{figure}
Returning to our game over the expected network exposure, we can then note that for any given $n$, the sets $\X = \{\{\Delta_{b,i}(n)\}_{i=1}^N\in\realnonnegative^N|\sum_{i=1}^N\Delta_{b,i}(n)\leq\budgetB\}$ and $\Y = \{\{\Delta_{r,i}(n)\}_{i=1}^N\in\realnonnegative^N|\sum_{i=1}^N\Delta_{r,i}(n)\leq\budgetR\}$ are convex and compact. This gives rise to the following result:

\begin{theorem}\longthmtitle{Nash Equilibrium for Network Exposure}\label{thm:nash_exposure}
Let $\G=(V,\E)$ be a general network equipped with the Polya network contagion model, with arbitrary initial conditions. For a given time $n$, consider a two-player zero-sum game where player one tries to minimize $E[\tS_n|\F_{n-1}]$ over the parameters $\{\Delta_{b,i}(n)\}_{i=1}^N$ and player two tries to maximize $E[\tS_n|\F_{n-1}]$ over the parameters $\{\Delta_{r,i}(n)\}_{i=1}^N$. Then, if we take our set of allowable policies to be of the form $\X\times\Y = \{\{\Delta_{b,i}(n)\}_{i=1}^N\in\realnonnegative^N|\sum_{i=1}^N\Delta_{b,i}(n)\leq\budgetB\}\times\{\{\Delta_{r,i}(n)\}_{i=1}^N\in\realnonnegative^N|\sum_{i=1}^N\Delta_{r,i}(n)\leq\budgetR\}$, the resulting game admits a Nash equilibrium. Moreover, the equilibrium policy will satisfy $\sum_{i=1}^N\Delta_{b,i}(n)=\budgetB$ and $\sum_{i=1}^N\Delta_{r,i}(n)=\budgetR$.
\end{theorem}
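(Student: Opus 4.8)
The plan is to recognize the game as a continuous two-player zero-sum game whose payoff is convex-concave on a product of compact convex sets, and to extract a saddle point from the classical minimax theorem; budget tightness will then follow from a \emph{strict} sharpening of the monotonicity already computed above. Write $f(\bx,\by) := E[\tS_n|\F_{n-1}]$ with $\bx=(\Delta_{b,1}(n),\ldots,\Delta_{b,N}(n))^T$ and $\by=(\Delta_{r,1}(n),\ldots,\Delta_{r,N}(n))^T$ as in the proof of Proposition~\ref{thm:convex}. First I would record that $f$ is continuous on $\X\times\Y$: each summand $f_{i,n}(\bx,\by,z_n)=(c_i+C_i\by)/(c_i+d_i+C_i\by+D_i\bx)$ has denominator bounded below by $c_i+d_i\ge\bT_i>0$ on $\realnonnegative^N\times\realnonnegative^N$, hence is smooth there, and $f$ is a finite convex combination of such terms with weights $P(Z_n=z_n|Z^{n-1})$ that do not depend on $(\bx,\by)$. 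Together with Proposition~\ref{thm:convex} (convex in $\bx$, concave in $\by$) and the stated convexity and compactness of $\X$ and $\Y$, this places us exactly in the hypotheses of the minimax theorem (von Neumann--Fan, or Sion's theorem in the quasiconvex--quasiconcave setting).

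Second, I would invoke the minimax theorem to obtain
\begin{equation*}
\min_{\bx\in\X}\max_{\by\in\Y} f(\bx,\by) \;=\; \max_{\by\in\Y}\min_{\bx\in\X} f(\bx,\by) \;=:\; v,
\end{equation*}
where all extrema are attained by continuity and compactness. Taking $\bx^*$ to attain the outer minimum on the left and $\by^*$ the outer maximum on the right, the standard sandwich argument yields $f(\bx^*,\by^*)=v$ and
\begin{equation*}
f(\bx^*,\by)\;\le\; f(\bx^*,\by^*)\;\le\; f(\bx,\by^*) \qquad \text{for all } (\bx,\by)\in\X\times\Y.
\end{equation*}
This saddle inequality says precisely that $\bx^*$ is a best response (minimizing) to $\by^*$ and $\by^*$ is a best response (maximizing) to $\bx^*$, i.e. $(\bx^*,\by^*)$ is a Nash equilibrium of the game, establishing existence. (Equivalently, one may appeal to the Debreu--Glicksberg--Fan existence theorem for concave games, which reduces to the same statement in the zero-sum case.)

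Third, for the budget claim I would upgrade the weak sign conditions $\partial f_{i,n}/\partial x_j\le 0$ and $\partial f_{i,n}/\partial y_j\ge 0$ derived above to strict monotonicity of $f$ in each coordinate. From
\begin{equation*}
\frac{\partial}{\partial x_j} f_{i,n}(\bx,\by,z_n)=\frac{-A_{ij}(1-z_{j,n})(c_i+C_i\by)}{(c_i+d_i+C_i\by+D_i\bx)^2},
\end{equation*}
the term $i=j$ has $A_{jj}=1$ and $c_j+C_j\by\ge c_j\ge\bR_j>0$, so it is strictly negative on the event $z_{j,n}=0$. Since the initial black counts give $\bB_j>0$, that event carries positive conditional probability $1-S_{j,n-1}>0$; because the weights $P(Z_n=z_n|Z^{n-1})$ are free of $(\bx,\by)$, differentiation passes through the finite sum, and after averaging over $i$ and over the realizations $z_n$ we get $\partial f/\partial x_j<0$ strictly throughout $\X\times\Y$, and symmetrically $\partial f/\partial y_j>0$. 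Consequently, if some equilibrium had $\sum_i\Delta_{b,i}(n)<\budgetB$, player one could raise a single $x_j$ within budget and strictly decrease $f$, contradicting that $\bx^*$ is a best response; hence $\sum_i\Delta_{b,i}(n)=\budgetB$, and symmetrically $\sum_i\Delta_{r,i}(n)=\budgetR$, at every equilibrium.

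The existence half is essentially a verification of hypotheses and should be routine once continuity is noted. The main obstacle is the budget-tightness half: the signs inherited from Proposition~\ref{thm:convex} are only weak, so the real work is to argue strictness of the monotonicity \emph{uniformly} on the feasible set. This rests on two model facts that must be invoked with care---the strict positivity of the initial red and black ball counts (so that both $c_i>0$ and the black draw $Z_{j,n}=0$ has positive conditional probability) and the independence of $P(Z_n=z_n|Z^{n-1})$ from $(\bx,\by)$, which is what allows the strict per-realization inequality to survive the expectation.
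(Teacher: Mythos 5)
Your proposal is correct and follows essentially the same route as the paper: existence via the classical minimax theorem applied to the convex-concave payoff (Proposition~\ref{thm:convex}) on the compact convex product $\X\times\Y$, and budget tightness via monotonicity of $E[\tS_n|\F_{n-1}]$ in each player's coordinates. Where the paper dispatches tightness in one sentence (``no saddle point in the interior of its domain''), your strict-monotonicity argument---using $\bR_j>0$ and $\bB_j>0$ so that both draw outcomes at node $j$ carry positive conditional probability, and the independence of $P(Z_n=z_n|Z^{n-1})$ from $(\bx,\by)$ so strictness survives the expectation---is a rigorous filling-in of exactly that step, which is genuinely needed since the weak derivative signs alone would not exclude a slack-budget equilibrium.
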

\begin{proof}
Since the function is convex-concave and over a compact set, the existence follows from the classical minimax theorem, see~\cite{DPB-AN-AEO:03}. By the definition of $E[\tS_n|\F_{n-1}]$, and since the function has no saddle point in the interior of its domain, the optimal policy will utilize the full budget.
\end{proof}

The equilibrium policy from Theorem~\ref{thm:nash_exposure} can be determined numerically using gradient descent algorithms~\cite{DPB:99} (as we are optimizing a convex/concave function over a simplex), but for large networks such algorithms can be computationally expensive. %It would thus be beneficial to know if we can analytically determine an optimal solution for each player based on the network topology and ball distributions. We provide a simplifying condition regarding our equilibrium policies:

\section{Simulations}\label{sec:simulations}

We have proven that there exists a Nash equilibrium when we consider the two player infection-curing game for the expected network exposure $E[\tS_n|\F_{n-1}]$. This however, does not guarantee that the same holds true when considering a similar setup for the average infection rate as in~\eqref{eq:obj_curing} and~\eqref{eq:obj_infect}. We need to check first of all, that we can find policies for each player that mimic the behaviour of a hypothetical equilibrium policy. As well, we want to see if the optimal policies for the network exposure provide a good estimation for our hypothetical equilibrium policy. 

We consider three basic networks, as depicted in Figures~\ref{subfig:linenet},~\ref{subfig:starnet}, and~\ref{subfig:circlenet}. Each network is assigned a uniform initial distribution for both red and black balls. A fixed budget of $\budgetB=\budgetR$ is given for both players at each time step. At any given time, the optimal policy for the infection-curing game over the expected network exposure is calculated for both the curing and infection parameters. The first set of simulations uses this optimal policy for both players. In the second set of simulations only the curing parameters are assigned using this optimal policy, while the infection parameters are distributed uniformly. In the final set of simulations, the infection parameteres are assigned using the optimal policy, while the curing parameters are uniformly distributed. At each time $n$, the draw values $\{Z_{i,n}\}_{i=1}^N$ are averaged over 1000 trials, and then averaged over all nodes to obtain the empirical average infection rate $\avginf$. The plots in Figure~\ref{fig:infectiongame} display the resulting output for each network plotted against time.

For all three networks, we observe that case 3 generally leads to the highest average infection rate and case 2 results in the lowest average infection rate. While we do not know what the optimal policy for either player would be in the case of $\avginf$, we can see that the behaviour observed suggests that a Nash equilibrium exists. This is less obvious when we consider the network in Figure~\ref{subfig:circlenet}. For this network, it makes sense that a hypothetically optimal policy would be close to uniform, and so we observe less distinct separation between each of the cases in Figure~\ref{subfig:circle} as compared to Figure~\ref{subfig:line} and Figure~\ref{subfig:star}. The results in Figure~\ref{subfig:circle} do not, however, preclude the possibility that there exists an equilibrium policy for both players.

In general, the results support the notion that a Nash equilibrium exists for the game over the average network infection, $\avginf$, and that using the solutions obtained from the gradient descent over the expected network exposure may provide a good estimation as to what such an equilibrium policy may look like. However, the desire to guarantee such claims necessitates further follow up, as these simulations provide only a narrow view.

\begin{figure*}[!th]
\centering
	\subfigure[Seven node line network.]{ \includegraphics[width=0.25\linewidth]{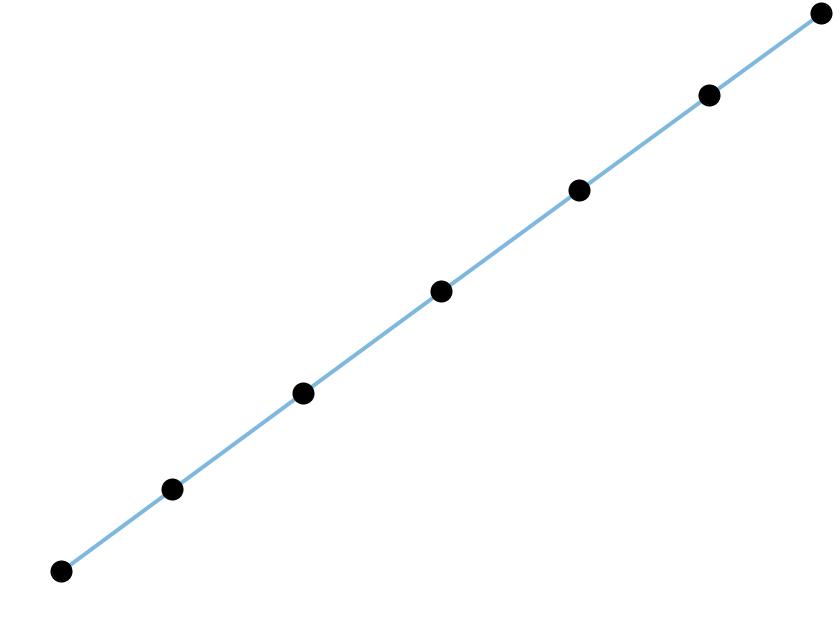} \label{subfig:linenet}}
\qquad
{
	\subfigure[Six node star network.]{ \includegraphics[width=0.25\linewidth]{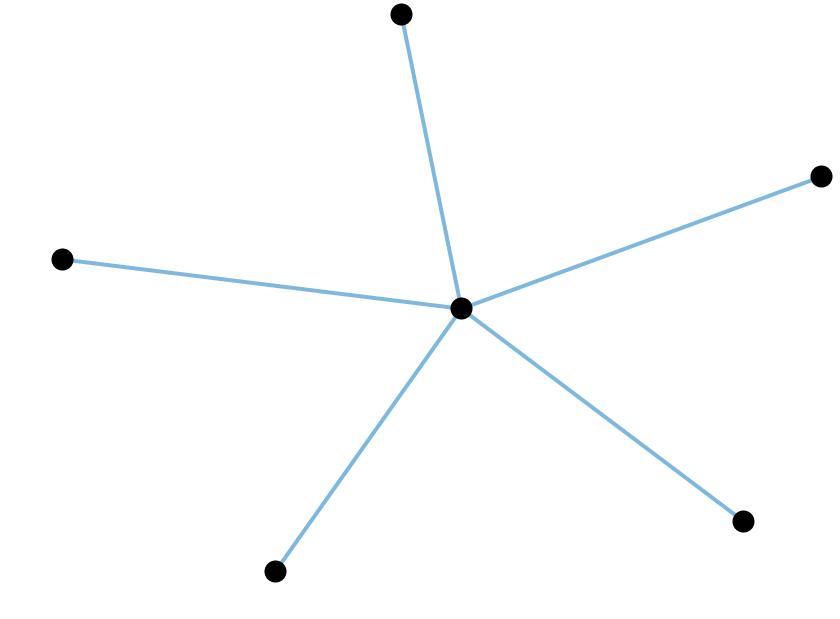} \label{subfig:starnet}}
}
\qquad
{
	\subfigure[Six node circular network.]{ \includegraphics[width=0.25\linewidth]{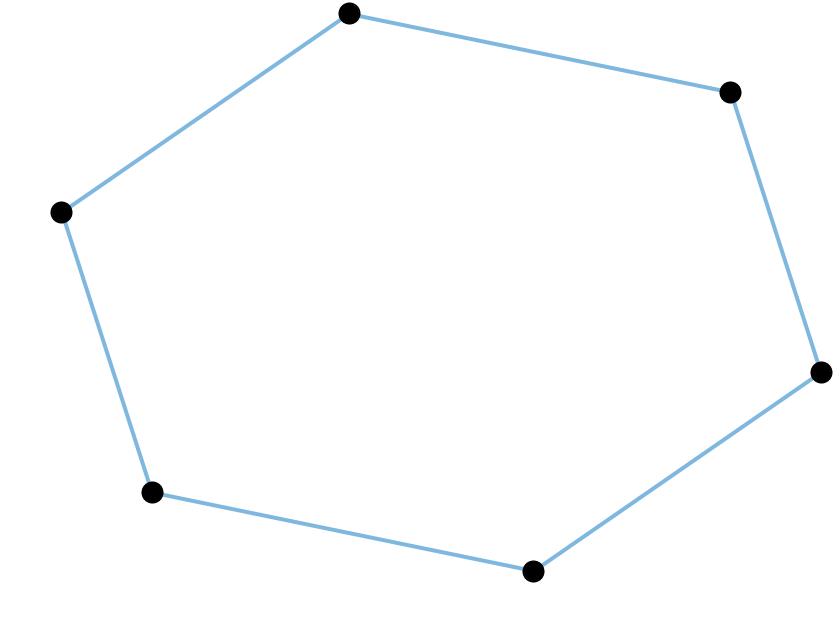} \label{subfig:circlenet}}
}
\\
{
	\subfigure[Plot of empirical average infection rate $ \avginf $ for line network depicted in Figure~\ref{subfig:linenet}.]{ \includegraphics[width=0.45\linewidth]{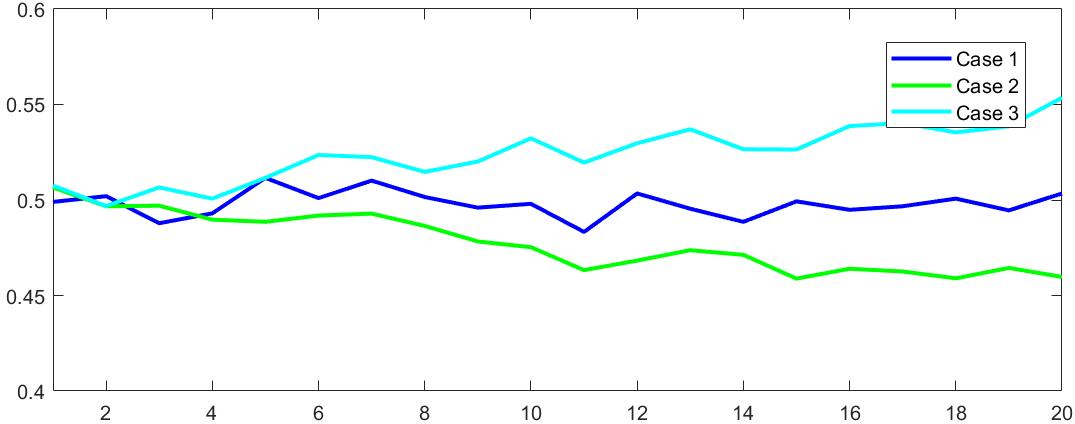} \label{subfig:line}}
}
\qquad
{
	\subfigure[Plot of empirical average infection rate $ \avginf $ for star network depicted in Figure~\ref{subfig:starnet}.]{ \includegraphics[width=0.45\linewidth]{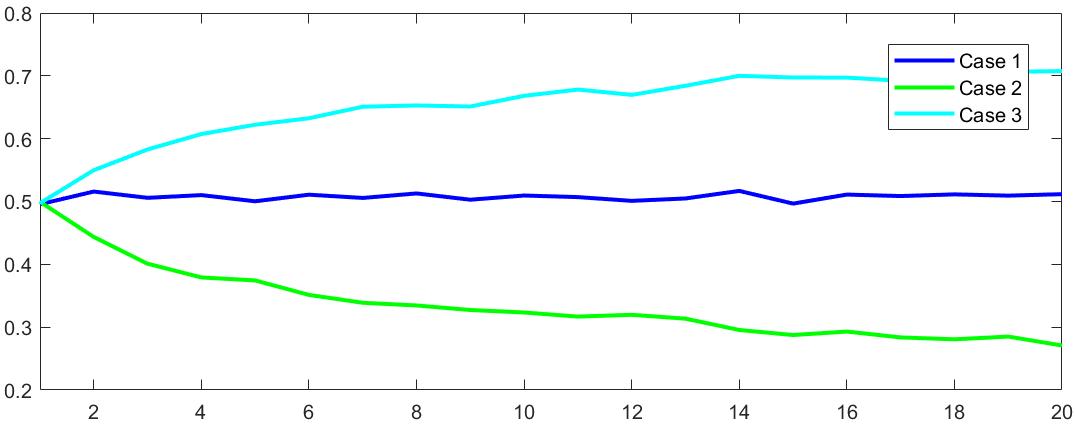} \label{subfig:star}}
}
\\
{
	\subfigure[Plot of empirical average infection rate $ \avginf $ for circular network depicted in Figure~\ref{subfig:circlenet}.]{ \includegraphics[width=0.45\linewidth]{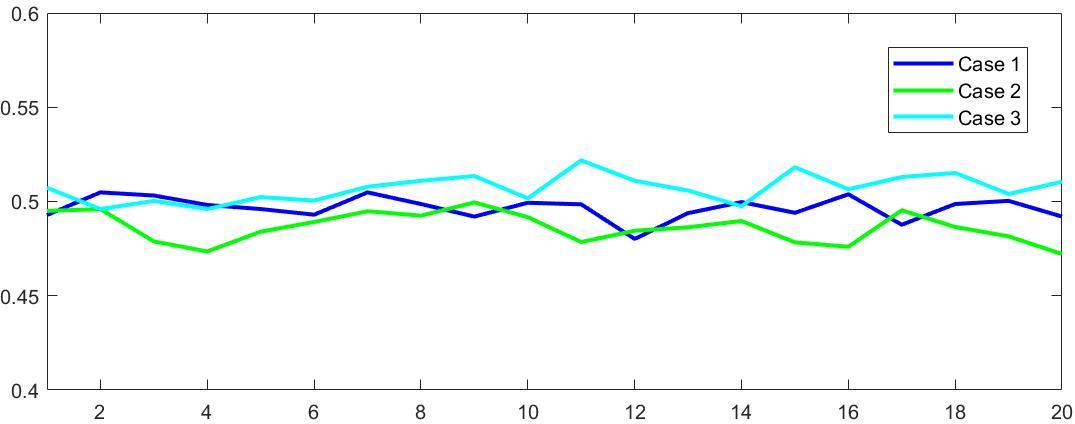} \label{subfig:circle}}
}
\caption{Comparison of simulation results for three test networks. Three cases were compared over each network. Case one used the equilibrium policies for the infection-curing game on the expected network exposure. These policies were obtained through gradient descent algorithms~\cite{DPB:99}. Case two used a uniform distribution for only the infection parameters. Case three used a uniform distribution for only the curing parameters. All simulations were performed with a uniform initial distribution of 10 red and black balls per node. A budget of $\budgetR=\budgetB=10N$ was used in all cases, where $N$ is the number of nodes in the network.}
\label{fig:infectiongame}
\end{figure*}

\section{Conclusions and Future Directions}\label{sec:future}

In this paper, we explored two player zero-sum games over Polya contagion networks, verifying analytically the existence of a Nash equilibrium for an infection-curing game on the expected network exposure, and providing empirical evidence to support the existence of an equilbrium point for the infection-curing game on the average network infection. Future work may further investigate the existence of such an equilibrium point. Considering variations of this problem may also be of interest; for example, it may be useful to explore the infection-curing game under the restriction that only a single node be provided resources, or to consider the case where the underlying ball distribution is unknown to the participating players. Such problems could be considered for both the expected network exposure and average infection rates. The infection-curing game on the average infection rate can also be further broken down into exploring both finite and infinite horizon cases.

\bibliographystyle{ieeetr}
\bibliography{alias,Main-add,GH-add}
\end{document}